\newtheorem{theorem}{Theorem}[section]
\newtheorem{proposition}[theorem]{Proposition}
\theoremstyle{remark}
\numberwithin{equation}{section}
\begin{document}


\title{The Littlewood decomposition via colored Frobenius partitions}

\author[H. Cho]{Hyunsoo Cho}
\address{Institute of Mathematical Sciences, Ewha Womans University,  52 Ewhayeodae-gil, Seodaemun-gu, Seoul 03760, Republic of Korea}
\email{hyunsoo@ewha.ac.kr}

\author[E. Kim]{Eunmi Kim}
\address{Institute of Mathematical Sciences, Ewha Womans University,  52 Ewhayeodae-gil, Seodaemun-gu, Seoul 03760, Republic of Korea}
\email{ekim67@ewha.ac.kr; eunmi.kim67@gmail.com}

\author[A. J. Yee]{Ae Ja Yee}
\address{Department of Mathematics, The Pennsylvania State University, University Park, PA 16802, USA}
\email{yee@psu.edu}

\dedicatory{Dedicated to George Andrews and Bruce Berndt for their 85th birthdays}


\begin{abstract} 
The Littlewood decomposition for partitions is a well-known bijection between partitions and pairs of $t$-core and $t$-quotient partitions. This decomposition can be described in several ways, such as the $t$-abacus method of James or the biinfinite word method of Garvan, Kim, and Stanton. In a recent study, Frobenius partitions have proven to be a highly useful tool in dealing with partition statistics related to $t$-core partitions. Motivated by this study, in this paper, we present an alternative description of the Littlewood decomposition using Frobenius partitions. We also apply our approach to self-conjugate partitions and doubled distinct partitions, and give new characterizations of their $t$-cores and $t$-quotients. 
\end{abstract}

\keywords{Frobenius partitions, Littlewood decomposition, Partition hooks, self-conjugate partitions, doubled distinct partitions}

\subjclass[2020]{05A17, 11P81}

\maketitle


\section{introduction}

For a positive integer $n$, a partition $\lambda=(\lambda_1,\lambda_2,\dots,\lambda_\ell)$ of $n$ is a weakly decreasing sequence of positive integers whose sum is $n$. Each $\lambda_i$ is called a part of $\lambda$. The sum of parts is called the size of $\lambda$ and is denoted by $|\lambda|$. It is a convention to define the empty sequence $\emptyset$ to be the partition of $0$.
The Young diagram of a partition $\lambda$ is a finite collection of boxes arranged in left-justified rows with $\lambda_i$ boxes in the $i$th row. In Figure~\ref{fig:diagram}, the Young diagram of the partition $(8,7,7,4,4,2)$ is illustrated.

\begin{figure}[ht!]
	\centering
	\begin{ytableau}
		13&12&10&9&6&5&4&1\\
		11&10&8&7&4&3&2\\
		10&9&7&6&3&2&1\\
		6&5&3&2\\
		5&4&2&1\\
	2&1
	\end{ytableau}
	\caption{The Young diagram of the partition $(8,7,7,4,4,2)$ with its hook lengths}
	\label{fig:diagram}
\end{figure}

In the Young diagram of a partition $\lambda$, the box in the $i$th row and the $j$th column is labeled $(i,j)$. The {\it hook length} of the box $(i,j)$, denoted by $h_{i,j}(\lambda)$, is the number of boxes below and to the right of the box $(i,j)$ plus 1, the box itself. In Figure \ref{fig:diagram}, the number in each box is its hook length. For a positive integer $t$, a partition $\lambda$ is called a {\it $t$-core} partition if no hook lengths are divisible by $t$. 

In the theory of modular representation of symmetric groups, $t$-core partitions play an important role \cite{N}. 
It was also shown by Littlewood 
\cite{L} that 
a partition $\lambda$ can be uniquely decomposed as a $t$-core partition $\lambda^{(t)}$ and a $t$-multipartition $\big( \lambda_{(0)}, \lambda_{(1)}, \ldots, \lambda_{(t)} \big)$. These $t$-core partition and $t$-multipartition are called the $t$-core and $t$-quotient of $\lambda$, respectively. This one-to-one correspondence is known as the {\it Littlewood decomposition} of $\lambda$ at $t$. 

The Littlewood decomposition is one of the essential components in defining the partition cranks of Garvan, Kim, and Stanton~\cite{GKS}, which give a combinatorial account for the mod 5, 7, and 11 partition congruences of Ramanujan. In a series of papers by Berkovich and Garvan~\cite{BG0, BG1, BG2}, this decomposition idea has been adopted for new partition statistics, namely the BG-rank and the GBG-rank, to give another combinatorial account for Ramanujan’s mod 5 partition congruence and to study the arithmetic properties of the generating function for $t$-core partitions.

In addition to the original description of Littlewood \cite{L}, \cite[pp.12--13]{M}, the Littlewood decomposition can be described in several other ways, for instance, James~\cite{James, JK} proposed the $t$-abacus method, while Garvan, Kim, and Stanton~\cite{GKS} utilized $t$-residue diagrams and biinfinite words, which have a crucial role in the development of the BG-rank and its generalizations in \cite{BD, BG0, BG1, BG2}. However, in a recent study~\cite{CKLLYY}, Frobenius partitions have proven to be a highly useful tool in dealing with partition statistics related to $t$-core partitions. Motivated by this study, in this paper, we give another full account for the Littlewood decomposition using Frobenius partitions. 

It should be noted that Frobenius partitions were also used in \cite{BN,K} to further study $t$-core partitions. Kolitsch \cite{K} was the first who gave a characterization of $t$-core partitions in terms of Frobenius partitions by analyzing the $t$-residue diagram and biinfinite word method of Garvan, Kim, and Stanton. In \cite{BN}, Brunat and Nath provided a more thorough and extensive study of $t$-cores and quotients employing Frobenius partitions and the abacus method of James. 
However, our approach in this paper is completely independent of the $t$-abacus method and the biinfinite word method. Thus, our proofs are more straightforward and transparent than those in \cite{BN, K}. Our method can also be applied to derive other results, such as formulas for the number of $t$-hooks \cite{CKKY}.  

A {\it Frobenius partition} for a positive integer $n$ is a two-rowed array of the form 
\[
	\begin{pmatrix} 
		a_1 & a_2 & \cdots & a_s \\ 
		b_1 & b_2 & \cdots & b_s 
	\end{pmatrix}
\]
such that $a_1>a_2>\cdots >a_s\ge 0$, $b_1>b_2>\cdots >b_s\ge 0$, and
\[
	\sum_{i=1}^s (a_j+b_j+1)=n.
\]
There is a natural one-to-one correspondence between partitions and Frobenius partitions. Given a partition $\lambda$, let $s$ be the largest integer such that $\lambda_s - s \ge 0$; this $s$ represents the side length of the largest square that can fit within the Young diagram of $\lambda$, often referred to as the {\it Durfee square}. The corresponding Frobenius partition is then represented by the two-rowed array
\[
	\mathfrak{F}(\lambda)=
	\begin{pmatrix} 
		\lambda_1 - 1 & \lambda_2 - 2 & \cdots & \lambda_s - s \\
		\lambda'_1 - 1 & \lambda'_2 - 2 & \cdots & \lambda'_s - s 
	\end{pmatrix},
\]
where $\lambda'_j$ denotes the number of boxes in the $j$th column of the Young diagram of $\lambda$. This array $\mathfrak{F}(\lambda)$ satisfies the conditions for Frobenius partitions, and the process is reversible, ensuring a unique Frobenius partition for each partition $\lambda$. For example, the Frobenius partition of the partition $\lambda = (8, 7, 7, 4, 4, 2)$ is
\[
    \mathfrak{F}(\lambda)=
	\begin{pmatrix} 
		7 & 5 & 4 & 0 \\
		5 & 4 & 2\ & 1 
	\end{pmatrix}.
\]    

Equivalently, the correspondence between partitions and Frobenius partitions can be stated as follows:
\begin{equation}\label{eq:Frob}
	\sum_{\lambda} q^{|\lambda|} =[z^0] (-zq;q)_{\infty} (-1/z;q)_{\infty}, 
\end{equation}
where the sum on the left side is over all partitions. Here and throughout this paper, $[z^0]F(z)$ will denote the constant term of a series $F(z)$ in $z$, and the following $q$-Pochhammer symbol will be used:
\[
	(a;q)_{\infty}:=\prod_{k=0}^{\infty} (1-aq^k). 
\]
Upon an application of Jacobi's triple product identity (see Section~\ref{sec2} for the identity),
the right side of \eqref{eq:Frob} expands as
\[
	[z^0]\frac{1}{(q;q)_{\infty}}\sum_{n=-\infty}^{\infty} z^n q^{\binom{n+1}{2}},
\]
from which the partition generating function immediately follows, confirming the correspondence with Frobenius partitions. The main objective of this paper is to adopt this correspondence and present an alternative description of the Littlewood decomposition using Frobenius partitions.

We will assume $t$ to be a positive integer throughout this paper. Let $\mathcal{P}$ be the set of all partitions and $\mathcal{C}_t$ be the set of all $t$-core partitions. The next theorem is our main result.
\begin{theorem}\label{main2}
	There is a bijection $\varphi$ between $\mathcal{P}$ and $\mathcal{C}_t\times \mathcal{P}^t$ defined by 
   	\[
    	\varphi(\lambda)=\left( \lambda^{(t)}, (\lambda_{(0)},\lambda_{(1)},\dots,\lambda_{(t-1)})\right) 
    \]
    with $|\lambda|=|\lambda^{(t)}|+t\sum\limits_{j=0}^{t-1}|\lambda_{(j)}|$.
\end{theorem}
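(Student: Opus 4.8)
The plan is to work directly with the integer set that the Frobenius symbol encodes. Via $\mathfrak{F}$, I identify $\lambda$ with the set $X(\lambda)=\{\lambda_i-i : i\ge 1\}\subseteq\mathbb{Z}$, whose non-negative elements are exactly the top row $a_1>\cdots>a_s\ge 0$ and whose negative non-elements are exactly $\{-b_j-1\}$; thus $X(\lambda)$ differs from $X(\emptyset)=\mathbb{Z}_{<0}$ in only finitely many places, and the size reads $|\lambda|=\sum_{x\ge 0,\,x\in X}x-\sum_{y<0,\,y\notin X}y$. First I would, for each residue $r\in\{0,\dots,t-1\}$, collapse the progression $\{tk+r:k\in\mathbb{Z}\}$ onto $\mathbb{Z}$ by $tk+r\mapsto k$ and set $X_r=\{k:tk+r\in X(\lambda)\}$. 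Each $X_r$ again agrees with $\mathbb{Z}_{<0}$ off a finite set, so it determines a unique charge $c_r:=|X_r\cap\mathbb{Z}_{\ge0}|-|\mathbb{Z}_{<0}\setminus X_r|$ with $\sum_r c_r=0$, and, after translating $X_r$ by $-c_r$ to normalize the charge to $0$, a unique partition $\lambda_{(r)}$ with $X(\lambda_{(r)})=X_r-c_r$.

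Next I would establish a hook-length dictionary purely from $X(\lambda)$: the multiset of hook lengths of $\lambda$ equals $\{x-y : x\in X(\lambda),\ y\notin X(\lambda),\ y<x\}$, a short computation from the Frobenius description of arms and legs. Since $t\mid(x-y)$ precisely when $x\equiv y\pmod t$, the partition $\lambda$ is a $t$-core if and only if no residue class contains an occupied position above an empty one, equivalently each $X_r$ is a half-line $\mathbb{Z}_{<c_r}$. Hence $t$-cores are parametrized exactly by the charge vectors $(c_0,\dots,c_{t-1})\in\mathbb{Z}^t$ with $\sum_r c_r=0$, and I define $\lambda^{(t)}$ to be the $t$-core attached to the charge vector of $\lambda$, reconstructed by filling each class with its ground state $\mathbb{Z}_{<c_r}$.

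With these pieces I set $\varphi(\lambda)=(\lambda^{(t)},(\lambda_{(0)},\dots,\lambda_{(t-1)}))$. The inverse map is forced: given a $t$-core (a charge vector $(c_r)$) and partitions $\lambda_{(r)}$, rebuild each class as $X_r=c_r+X(\lambda_{(r)})$, reassemble $X=\bigcup_r\{tk+r:k\in X_r\}$, and read off the Frobenius symbol; this is visibly a two-sided inverse of the splitting above, giving the bijection. For the size identity I would split the position-sum for $|\lambda|$ over residue classes and substitute $x=tk+r$, so that the class-$r$ contribution is $t\,S(X_r)+r\,c_r$, where $S(Z)=\sum_{k\ge0,\,k\in Z}k-\sum_{k<0,\,k\notin Z}k$ is the same functional applied to a collapsed set. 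The key lemma is the charge-shift formula $S(c+X(\mu))=|\mu|+\binom{c}{2}$, valid for any partition $\mu$ and integer $c$ because $X(\mu)$ has charge $0$; it turns the class-$r$ term into $t\bigl(|\lambda_{(r)}|+\binom{c_r}{2}\bigr)+r\,c_r$, and summing gives
\[
|\lambda|=t\sum_{r=0}^{t-1}|\lambda_{(r)}|+\sum_{r=0}^{t-1}\left(t\binom{c_r}{2}+r\,c_r\right).
\]
Specializing the whole construction to $\lambda=\lambda^{(t)}$, where every $\lambda_{(r)}=\emptyset$, identifies the residual sum as $|\lambda^{(t)}|$, which closes the identity.

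The main obstacle is the size bookkeeping rather than the bijection itself: I expect the splitting and reassembly to be mechanically reversible, but proving the charge-shift formula $S(c+X(\mu))=|\mu|+\binom{c}{2}$ cleanly---and, hand in hand with it, verifying that the position-sum for $|\lambda|$ really does decompose additively over residue classes with no hidden cross terms---is the delicate step. A secondary but conceptually essential point is deriving the hook-length dictionary $\{x-y\}$ directly from the Frobenius data, since this is exactly what lets me characterize $t$-cores without appealing to the abacus or biinfinite-word machinery, in keeping with the paper's aim.
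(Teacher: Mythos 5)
Your proof is correct: the beta-set dictionary $X(\lambda)=\{\lambda_i-i : i\ge 1\}$, the hook-length multiset $\{x-y: x\in X,\ y\notin X,\ y<x\}$, the residue decomposition with charges $c_r$ summing to $0$, and the charge-shift formula $S(c+X(\mu))=|\mu|+\binom{c}{2}$ are all standard facts, your sketches of them are sound, and the specialization trick at the end (running the construction on $\lambda^{(t)}$ itself to identify the residual sum) legitimately closes the size identity without ever computing $|\lambda^{(t)}|$ separately. However, this is a genuinely different route from the paper's --- and in fact it is precisely the route the paper set out to avoid: your classes $X_r$ and charges $c_r$ are the runners of James's $t$-abacus, equivalently the biinfinite $0$--$1$ words of Garvan--Kim--Stanton (your charge vector is their $\vec{n}$), whereas the authors stress that their construction is ``completely independent of the $t$-abacus method and the biinfinite word method.'' The paper stays with finite data throughout: it forms the $t$-colored Frobenius partition $\mathfrak{CF}_t(\lambda)$, splits its entries by color into $t$ two-rowed arrays $\mathfrak{C}(\lambda_{(j)})$ with possibly unequal row lengths $u_j,v_j$, applies Wright's bijection (from the combinatorial proof of Jacobi's triple product identity) to each array to produce the quotient $\lambda_{(j)}$, and uses the differences $w_j=u_j-v_j$ (which coincide with your $c_r$) to build staircase columns of $\mathfrak{CF}_t(\lambda^{(t)})$; the size identity is then checked by an explicit finite computation. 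The two constructions produce the same decomposition --- Wright's map is exactly your charge normalization in disguise, with the discarded staircase recording the charge --- but they buy different things: your version handles the weight bookkeeping more slickly and connects directly to the abacus picture, while the paper's version is Frobenius-native, keeps every object finite, and makes its subsequent applications (the $t$-core characterization of Theorem~\ref{main1}, Kolitsch's theorem, and the self-conjugate and doubled distinct cases) immediate in the same language.
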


As an application of our theorem, we give a concise and elegant description of the $t$-core in terms of Frobenius partitions. 
This is closely related to a result of Brunat and Nath \cite[Lemma~3.19]{BN}.

\begin{theorem} \label{main1}
	A partition $\lambda$ is a $t$-core partition if and only if $\mathfrak{F}(\lambda)=\begin{pmatrix} 
		a_1 & a_2 & \cdots & a_s \\ 
		b_1 & b_2 & \cdots & b_s 
	\end{pmatrix}$ satisfies the following conditions: 
    \begin{enumerate}
        \item For any $i,j$, $a_i+b_j+1 \not\equiv 0 \pmod{t}$.
        \item If $a_i\ge t$, then $a_i-t$ must appear in the first row of $\mathfrak{F}(\lambda)$.
        \item If $b_j\ge t$, then $b_j-t$ must appear in the second row of $\mathfrak{F}(\lambda)$. 
    \end{enumerate}
\end{theorem}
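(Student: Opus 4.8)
The plan is to reduce the $t$-core condition, which is stated in terms of hook lengths, to a single closure property of an auxiliary set of integers attached to $\lambda$, and then to read that property directly off the two rows of $\mathfrak{F}(\lambda)$. The starting point is the elementary identity $h_{i,j}(\lambda)=(\lambda_i-i)+(\lambda'_j-j)+1$, valid for every box $(i,j)$. Extending $\lambda$ and $\lambda'$ by zeros, set $\beta_i:=\lambda_i-i$ and $\gamma_j:=j-1-\lambda'_j$ for all $i,j\ge 1$, so that $h_{i,j}(\lambda)=\beta_i-\gamma_j$. Let $B:=\{\beta_i:i\ge 1\}$. Then $i\mapsto\beta_i$ is a bijection onto $B$, $j\mapsto\gamma_j$ is a strictly increasing bijection onto $\mathbb{Z}\setminus B$, and (by the boundary lattice path of $\lambda$) the boxes of $\lambda$ are in bijection with the pairs $(\beta_i,\gamma_j)$ satisfying $\beta_i>\gamma_j$, the hook length of the box being $\beta_i-\gamma_j$. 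Hence the multiset of hook lengths of $\lambda$ equals $\{\beta-\gamma:\beta\in B,\ \gamma\in\mathbb{Z}\setminus B,\ \beta>\gamma\}$, and $\lambda$ is a $t$-core if and only if no such pair has $\beta\equiv\gamma\pmod t$.

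The crucial reformulation is the \emph{closure property}
\[
    x\in B\ \Longrightarrow\ x-t\in B .
\]
I would first show this is equivalent to the $t$-core condition: if $x\in B$ with $x-t\notin B$, then $(x,x-t)$ is a forbidden pair; conversely, for a forbidden pair $(\beta,\gamma)$ with $\beta=\gamma+kt$, $k\ge 1$, walking along $\gamma,\gamma+t,\dots,\gamma+kt=\beta$ from outside $B$ to inside $B$ yields a consecutive step $\gamma+mt\notin B$, $\gamma+(m+1)t\in B$, violating closure. Here one uses that $B$ contains all sufficiently small and omits all sufficiently large integers.

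Next I would translate $B$ into Frobenius data. Since $\beta_i\ge 0$ exactly for $i\le s$, the nonnegative part of $B$ is $\{a_1,\dots,a_s\}$; and since $\mathbb{Z}\setminus B=\{\,-1-b_j:j\ge 1\,\}$, its negative part is exactly $\{-1-b_1,\dots,-1-b_s\}$. Now I would verify closure on each $x\in B$ by cases. For a nonnegative bead $x=a_i$ with $a_i\ge t$, closure demands $a_i-t\in\{a_1,\dots,a_s\}$, which is (2). For a negative bead $x$, closure requires $x-t$ (again negative) to be a bead rather than a gap, and this fails precisely when some $b_j\ge t$ has $b_j-t$ missing from the second row, i.e.\ precisely when (3) fails. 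The only remaining case, a nonnegative bead $a_i<t$, demands $a_i-t\notin\{-1-b_j\}$, that is $a_i+b_j+1\ne t$ for all $j$. Thus closure is equivalent to (2) and (3) together with the equality constraint $a_i+b_j+1\ne t$ for all $i,j\le s$.

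It remains to reconcile this equality constraint with condition (1). For $t$-core $\Rightarrow$ (1), observe that every pair $(i,j)$ with $i,j\le s$ indexes a genuine box of the Durfee square, whose hook length is $a_i+b_j+1$; a $t$-core has no hook length divisible by $t$, so (1) holds outright. For the converse, (1) trivially forces $a_i+b_j+1\ne t$, so (1), (2), (3) together give the closure property and hence that $\lambda$ is a $t$-core. I expect the main obstacle to be twofold: a careful proof of the bijective hook-length description underlying the first step, and the sign bookkeeping in the case analysis of the third paragraph, namely keeping the boundary between nonnegative beads and negative gaps straight and checking that the cases $b_j<t$ and $a_i\ge t$ impose no hidden extra constraint.
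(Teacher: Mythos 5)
Your proof is correct, but it takes a genuinely different route from the paper's. You reduce the theorem to the beta-set $B=\{\lambda_i-i : i\ge 1\}$ and its complement $\{j-1-\lambda'_j : j\ge 1\}$, identify hook lengths with the differences $\beta-\gamma$ across the boundary path, characterize $t$-cores by the closure property $x\in B\Rightarrow x-t\in B$, and only at the end translate into the Frobenius symbol by splitting $B$ into its nonnegative part $\{a_i\}$ and the negative gaps $\{-1-b_j\}$; this is in essence James' one-runner abacus (Maya diagram) argument. The paper deliberately avoids that machinery: it works directly with the Frobenius symbol, splits boxes into three regions relative to the Durfee square, derives from the rim-hook picture the range inequalities $a_i-a_\ell<h_{i,j}<a_i-a_{\ell+1}$ for hooks to the right of the square (and the analogous ones below), and then shows that a failure of (1) produces a hook divisible by $t$, while a failure of (2) or (3) forces a hook of length exactly $t$. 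What your route buys: a symmetric and fully explicit treatment of both directions of the equivalence --- the paper's written proof only spells out necessity ($t$-core $\Rightarrow$ (1),(2),(3)), leaving sufficiency implicit in its case analysis --- plus the sharp structural observation that (2), (3) and the single equality $a_i+b_j+1\ne t$ already imply the $t$-core property, so the full congruence condition in (1) is needed only because a $t$-core genuinely satisfies it, not because sufficiency requires it. What the paper's route buys: independence from the abacus/beta-set formalism (one of its stated goals), and no reliance on the classical boundary-path bijection, which your write-up invokes and correctly flags as the one ingredient you do not establish yourself; it is standard and routine to verify, but in a self-contained account it would have to be proved, whereas the paper's hook-length identity $h_{i,j}=(\lambda_i-i)+(\lambda'_j-j)+1$ and its case analysis need nothing beyond the definitions.
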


As further applications, we will study the Littlewood decomposition for self-conjugate partitions and doubled distinct partitions, and present characterizations of their $t$-cores and $t$-quotients in terms of Frobenius partitions. 

The rest of this paper is organized as follows. In Section \ref{sec2}, we will give the definition of $t$-colored Frobenius partitions and a brief review of Jacobi's triple product identity and Wright's map, which will be used in later sections. In Section \ref{sec3}, our bijection $\varphi$ will be presented and Theorems \ref{main2} and \ref{main1} will be proved. Also, theorems on $t$-core partitions and hooks of length $t$ will be given. In Section \ref{sec4}, self-conjugate partitions and doubled distinct partitions will be discussed.

\section{Preliminaries}\label{sec2}

\subsection{$t$-Colored Frobenius partitions}
Andrews~\cite{GEA} generalized Frobenius partitions by relaxing the condition on the entries of a Frobenius partition. He gave a thorough study to two types of generalizations, one of which is $t$-colored Frobenius partitions. We first take $t$ copies of each nonnegative integer $k$, denoted by $k_i$ for $i=0,\ldots, t-1$, and give the following total ordering:
\[
	k_i > m_j \text{ if and only if } k> m \text{ or } k=m, i>j. 
\]
We call $k_i$ the integer $k$ with color $i$. A $t$-colored Frobenius partition of $n$ is a two-rowed array, in which each row has nonnegative integers with possible $t$ colors in strictly decreasing order, and the sum of the numerical values of the entries plus the number of columns equals $n$. For example, the following are $2$-colored Frobenius partitions of $2$:
\[
    \begin{pmatrix}
        0_1 & 0_0\\ 0_1& 0_0
    \end{pmatrix},
    \begin{pmatrix}
        1_0\\ 0_0
    \end{pmatrix},
    \begin{pmatrix}
        1_1\\ 0_0
    \end{pmatrix},
    \begin{pmatrix}
        1_0\\ 0_1
    \end{pmatrix},
    \begin{pmatrix}
        1_1\\ 0_1
    \end{pmatrix},
    \begin{pmatrix}
        0_0\\ 1_0
    \end{pmatrix},
    \begin{pmatrix}
        0_1\\ 1_0
    \end{pmatrix},
    \begin{pmatrix}
        0_0\\ 1_1
    \end{pmatrix},
    \begin{pmatrix}
        0_1\\ 1_1
    \end{pmatrix}.
\]
For a partition $\lambda$, Kolitsch~\cite{K} introduced the {\it $t$-colored Frobenius partition} $\mathfrak{CF}_t(\lambda)$ of $\lambda$ as follows: for
\[
    \mathfrak{F}(\lambda)= \begin{pmatrix}
    	a_1& a_2 & \cdots & a_s\\ b_1 & b_2 & \cdots & b_s 
    \end{pmatrix},
\]
\[
    	\mathfrak{CF}_t(\lambda)=
	\begin{pmatrix} 
		{q_1}_{(r_1)} & {q_2}_{(r_2)} & \cdots & {q_s}_{(r_s)} \\ 
		{q'_1}_{(r'_1)} & {q'_2}_{(r'_2)} & \cdots & {q'_s}_{(r'_s)} 
	\end{pmatrix},
\]
where $a_i=tq_i+r_i$ and $b_i=tq'_i+(t-r'_i-1)$ for nonnegative integers $q_i$ and $r_i$ with $0 \le r_i,r'_i \le t-1$. Here, the subscripts $(r_i)$ and $(r'_i)$ denote colors. 
For example, the Frobenius partition and $3$-colored Frobenius partition of $\lambda=(8,7,7,4,4,2)$ are
\[
    	\mathfrak{F}(\lambda)=
	\begin{pmatrix} 
		7 & 5 & 4 & 0 \\
		5 & 4 & 2 & 1 
	\end{pmatrix}
    	\quad \text{and} \quad
    	\mathfrak{CF}_t(\lambda)=
	\begin{pmatrix} 
		2_{1} & 1_{2} & 1_{1} & 0_{0} \\
		1_{1} & 1_{0} & 0_{1} & 0_{0} 
	\end{pmatrix}.
\]
We note that the order of the entries corresponding to $b_i$'s with the same $q'_i$ is reversed in $\mathfrak{CF}_t(\lambda)$, i.e.,
\[
	tq+(t-r'_i-1)> tq+(t-r'_j-1)\quad  \text{ if and only if } \quad q_{r'_i} < q_{r'_j}.
\]

\subsection{Jacobi's triple product identity and Wright's map}\label{sec2.2}

The following triple product identity of Jacobi is instrumental in proving the Littlewood decomposition: 
\begin{equation}\label{JTP}
	(-zq;q)_{\infty}(-1/z;q)_{\infty} =\frac{1}{(q;q)_{\infty}}\sum_{n=-\infty}^{\infty} z^nq^{\binom{n+1}{2}}.
\end{equation}
This identity has several proofs, analytic or combinatorial. In our paper, we will use the combinatorial proof of Wright~\cite{Pak, Wright}. The infinite product on the left side of \eqref{JTP} is the generating function for two-rowed arrays of nonnegative integers:
\[
	\begin{pmatrix} 
		a_1& a_2 & \ldots & a_{u} \\ b_1 & b_2 & \ldots & b_{v}
	\end{pmatrix}.
\]
Note that $u$ is not necessarily equal to $v$. On the other hand, the right side of \eqref{JTP} generates pairs $(\Delta, \mu)$ of a staircase partition $\Delta$ and an ordinary partition $\mu$. A staircase partition is a partition with parts $1,2,\ldots, k$ only, i.e., $\Delta=(k,k-1,\ldots, 2,1)$ for some $k\geq 1$.  
Wright's map is a weight-preserving bijection between such two-rowed arrays and pairs $(\Delta, \mu)$. This can be explicitly defined as follows:
\[
    \begin{pmatrix} 
    	a_1& a_2 & \ldots & a_{u} \\ b_1 & b_2 & \ldots & b_{v}
    \end{pmatrix} \longmapsto \big(\Delta, \mu \big),
\]
where
\[
    \Delta= \begin{cases} (u-v,\ldots, 1) & \text{ if $u\ge v$},\\
    	(v-u-1,\ldots, 1) & \text{ if $u<v$},
    \end{cases}
    \qquad \text{and} \qquad
	\mu=(\mu_1,\dots,\mu_\ell)
\] 
with
$\mu_i=a_{i}+i-(u-v) \text{ for $i\le u$}$
and $(\mu_{u+1},\dots,\mu_{\ell})$ being the conjugate of a partition $\nu=(b_{1}-v+1,b_{2}-v+2,\dots,b_{v})$. We allow zero parts in the partition $\nu$. 

This map can be visualized using generalized Young diagrams (see \cite{Yee}). For convenience, we use dots instead of boxes. Fix a diagonal line from top left to bottom right. We first place $u$ dots on the diagonal, and then place $a_1,a_2,\dots,a_u$ dots to the right of each dot on the diagonal starting from top to bottom. Next, we place $b_v, b_{v-1},\dots, b_1$ dots below each dot on the diagonal starting from right to left. We then separate $\binom{u-v+1}{2}$ many dots to get $\Delta$ and the remaining dots form $\mu$.

For example, in Figure \ref{fig2.1}, the leftmost picture shows the arrangement of dots for the diagonal and the top row entries, and the arrangement for the bottom row entries is shown in the middle picture. The rightmost picture shows how to separate $\binom{u-v+1}{2}$ many dots.  
\begin{figure}[ht!]
    \begin{tikzpicture}[scale=1.5]
    
        \foreach \x in {0,...,4}
            \filldraw (\x*.25, -\x*0.25) circle (.5mm);
            
        \foreach \x in {1,...,6}
            \filldraw (\x*.25, 0) circle (.5mm);
        \foreach \x in {2,...,6}
            \filldraw (\x*.25, -.25) circle (.5mm);
        \foreach \x in {3,...,5}
            \filldraw (\x*.25, -.5) circle (.5mm);
        \foreach \x in {4,...,5}
            \filldraw (\x*.25, -.75) circle (.5mm);
        
        \foreach \x in {0,...,4}
            \filldraw (3+\x*.25, -\x*0.25) circle (.5mm);
            
        \foreach \x in {1,...,6}
            \filldraw (3+\x*.25, 0) circle (.5mm);
        \foreach \x in {2,...,6}
            \filldraw (3+\x*.25, -.25) circle (.5mm);
        \foreach \x in {3,...,5}
            \filldraw (3+\x*.25, -.5) circle (.5mm);
        \foreach \x in {4,...,5}
            \filldraw (3+\x*.25, -.75) circle (.5mm);
            
        \foreach \x in {3,...,6}
            \filldraw (3.5, -\x*0.25) circle (.5mm);
        \foreach \x in {4,...,5}
            \filldraw (3.75, -\x*.25) circle (.5mm);
        \foreach \x in {5,...,5}
            \filldraw (4, -\x*.25) circle (.5mm);
        \draw[thick, blue] (-0.2, 0.2) -- (1.2, -1.2);
        \draw[thick, blue] (2.8, 0.2) -- (4.2, -1.2);
        
        \foreach \x in {0,...,4}
            \filldraw (6+\x*.25, -\x*0.25) circle (.5mm);
            
        \foreach \x in {1,...,6}
            \filldraw (6+\x*.25, 0) circle (.5mm);
        \foreach \x in {2,...,6}
            \filldraw (6+\x*.25, -.25) circle (.5mm);
        \foreach \x in {3,...,5}
            \filldraw (6+\x*.25, -.5) circle (.5mm);
        \foreach \x in {4,...,5}
            \filldraw (6+\x*.25, -.75) circle (.5mm);
            
        \foreach \x in {3,...,6}
            \filldraw (6.5, -\x*0.25) circle (.5mm);
        \foreach \x in {4,...,5}
            \filldraw (6.75, -\x*.25) circle (.5mm);
        \foreach \x in {5,...,5}
            \filldraw (7, -\x*.25) circle (.5mm);
            
        \draw[thick, blue] (6.38, 0.1) -- (6.38, -1.7);
    \end{tikzpicture}
    \caption{$\begin{pmatrix} 6& 5& 3& 2& 0 \\ & & 4& 2& 1\end{pmatrix} \mapsto \big( (2,1), ( 5,5,4,4,3,3,1) \big)$}
    \label{fig2.1}
\end{figure}
The $u<v$ case is illustrated in Figure \ref{fig2.2}.
\begin{figure}[ht!]
    \begin{tikzpicture}[scale=1.5]
    
    \foreach \x in {2,...,6}
        \filldraw (\x*.25, -0.5) circle (.5mm);
    \foreach \x in {3,...,5}
        \filldraw (\x*.25, -.75) circle (.5mm);
    \foreach \x in {4,...,5}
        \filldraw (\x*.25, -1) circle (.5mm);
    
    \foreach \x in {2,...,6}
        \filldraw (3+\x*.25, -0.5) circle (.5mm);
    \foreach \x in {3,...,5}
        \filldraw (3+\x*.25, -.75) circle (.5mm);
    \foreach \x in {4,...,5}
        \filldraw (3+\x*.25, -1) circle (.5mm);
    
    \foreach \x in {1,...,6}
        \filldraw (3, -\x*.25) circle (.5mm);
    \foreach \x in {2,...,6}
        \filldraw (3.25, -\x*.25) circle (.5mm);
    \foreach \x in {3,...,5}
        \filldraw (3.5, -\x*.25) circle (.5mm);
    \foreach \x in {4,...,5}
        \filldraw (3.75, -\x*.25) circle (.5mm);
    
    \draw[thick, blue] (-0, 0) -- (1.2, -1.2);
    \draw[thick, blue] (2.9, 0.1) -- (4.2, -1.2);
    
    \foreach \x in {2,...,6}
        \filldraw (6+\x*.25, -0.5) circle (.5mm);
    \foreach \x in {3,...,5}
        \filldraw (6+\x*.25, -.75) circle (.5mm);
    \foreach \x in {4,...,5}
        \filldraw (6+\x*.25, -1) circle (.5mm);
    
    \foreach \x in {1,...,6}
        \filldraw (6, -\x*.25) circle (.5mm);
    \foreach \x in {2,...,6}
        \filldraw (6.25, -\x*.25) circle (.5mm);
    \foreach \x in {3,...,5}
        \filldraw (6.5, -\x*.25) circle (.5mm);
    \foreach \x in {4,...,5}
        \filldraw (6.75, -\x*.25) circle (.5mm);
    
    \draw[thick, blue] (5.8, -0.38) -- (7.7, -0.38);
    \end{tikzpicture}
    \caption{$\begin{pmatrix}
    & & 4 & 2& 1\\
    6& 5& 3& 2& 0
    \end{pmatrix} \mapsto \big( (1), (7,6,6,4,2) \big) $}
    \label{fig2.2}                                                                  
\end{figure}

\section{The Littlewood decomposition via Frobenius partitions}\label{sec3}

Let $p(n)$ be the number of partitions of $n$. The Littlewood decomposition implies that
\begin{align} \label{bijection1}
	\sum_{n\ge 0} p(n) q^{n} &= \frac{1}{(q^t;q^t)_{\infty}^t} \sum_{n\ge 0} c_t(n) q^n,\, 
\end{align}
where $c_t(n)$ denotes the number of $t$-core partitions of $n$. 

Using the constant term method and the Jacobi triple product identity, we can derive 
\begin{align}
	\sum_{n\ge 0} p(n) q^n &=[z^0] \prod_{j=1}^{t} (-zq^j;q^t)_{\infty} (-q^{t-j}/z;q^t)_{\infty} \notag \\
	&= \frac{1}{(q^t;q^t)^t_{\infty}}  [z^0] \prod_{j=1}^t \sum_{ k=-\infty}^{\infty} z^k q^{jk+t k(k-1)/2} \notag \\
	&=\frac{1}{(q^t;q^t)_{\infty}^t} \sum_{(m_1,\ldots, m_{t})\in \mathbb{Z}^t \atop m_1+\cdots +m_t=0} q^{ \sum_{j=1}^{t} j m_j + t  \sum_{j=1}^t m_j(m_j-1) /2}. \label{new-bijection1}
\end{align}
It is easy to check that the sum on the right side of \eqref{new-bijection1} is equal to the generating function for $c_t(n)$ in  \cite[Eq. (2.2)]{GKS}, given by
\begin{equation}\label{char}
	\sum_{n\ge 0} c_t(n)q^n = \sum_{\vec{n}\in \mathbb{Z}^t \atop \vec{n} \cdot \vec{1}=0}q^{\frac{t}{2}||\vec{n}||^2 + \vec{b} \cdot \vec{n}}, 
\end{equation}
where $\vec{b}=(0,1,\dots,t-1)$ and $\vec{1}=(1,1,\dots,1)$. Thus,
\begin{align}\label{bijection2}
	\sum_{n\ge 0} c_t(n)q^n  = \sum_{(m_1,\ldots, m_{t})\in \mathbb{Z}^t \atop m_1+\cdots +m_t=0} q^{t  \sum_{j=1}^t m_j(m_j-1) /2+ \sum_{j=1}^{t} j m_j},
\end{align}
from which with \eqref{new-bijection1}, we arrive at \eqref{bijection1}.
This derivation leads us to our Theorem~\ref{main2}. 

\subsection{Bijection $\varphi$}  In this section, we prove Theorem~\ref{main2} by constructing a bijection $\varphi$,
\begin{align*}
    	\varphi: \; & \mathcal{P} \to \mathcal{C}_t\times \mathcal{P}^t \\ 
	& \lambda \mapsto \big(\lambda^{(t)}, (\lambda_{(0)}, \ldots, \lambda_{(t-1)})\big).
\end{align*}
Given a partition $\lambda$, we take its corresponding $t$-colored Frobenius partition
\[
	\mathfrak{CF}_t(\lambda)=
	\begin{pmatrix} 
		{q_1}_{(r_1)} & {q_2}_{(r_2)} & \cdots & {q_s}_{(r_s)} \\ 
		{q'_1}_{(r'_1)} & {q'_2}_{(r'_2)} & \cdots & {q'_s}_{(r'_s)} 
	\end{pmatrix},
\]
and then divide the entries into $t$ groups according to their colors to form two-rowed arrays $\mathfrak{C}(\lambda_{(j)})$, i.e.,
\begin{align*}
	\mathfrak{C}(\lambda_{(j)})=\begin{pmatrix} a_{j,1}\; a_{j,2} \; \cdots \; a_{j,u_j} \\ b_{j,1}\; b_{j,2}\; \cdots \; b_{j,v_j} \end{pmatrix}  \text{ for $j=0,\ldots, t-1$},
\end{align*}
where $a_{j,i}$ and $b_{j,i}$ are the top and bottom entries of $\mathfrak{CF}_t(\lambda)$ with color $j$, respectively. Note that $u_j$ and $v_j$ may be different.

We first construct $\mathfrak{CF}_t(\lambda^{(t)})$ from $\mathfrak{C}(\lambda_{(j)})$'s. 
Let $u_j -v_j =w_j$. If $w_j>0$, add $(w_j-1)_{j},(w_j-2)_{j},\dots, 1_j, 0_{j}$ to the top row of $\mathfrak{CF}_t(\lambda^{(t)})$; if $w_j<0$, add $(-w_j-1)_{j},(-w_j-2)_{j},\dots,1_j, 0_{j}$ to the bottom row of $\mathfrak{CF}_t(\lambda^{(t)})$. If $w_j=0$, $\mathfrak{CF}_t(\lambda^{(t)})$ has no entry with color $j$.  We also note that $(w_0, w_1, \dots, w_{t-1})$ is indeed the characteristic vector $c(\lambda)=(n_0, n_1, \dots, n_{t-1})$ in \eqref{char}.   

For $\lambda_{(j)}$, we drop the color $j$ of each entry in $\mathfrak{C}(\lambda_{(j)})$ and apply Wright's map from Section~\ref{sec2.2}. We take $\lambda_{(j)}$ to be the resulting ordinary partition $\mu$. 

For example, let $t=3$ and $\lambda=(8,7,7,4,4,2)$. Then, 
\begin{align*}
	\mathfrak{F}(\lambda)=\begin{pmatrix} 7\;\,  5\;\,  4 \;\, 0  \\  5\;\,  4 \;\,  2\;\,  1\end{pmatrix}
	\quad \text{and} \quad
	\mathfrak{CF}_3(\lambda)=\begin{pmatrix} 2_{1} \;\,  1_{2}\;\,  1_{1} \;\, 0_{0}  \\  1_{1} \;\,  1_{0} \;\,  0_{1}\;\,  0_{0}\end{pmatrix}.
\end{align*}
Now, we split $\mathfrak{CF}_3(\lambda)$ into $3$ arrays:
\[
    \mathfrak{C}(\lambda_{(0)})= \begin{pmatrix} \;\;\;\; 0\\ 1\;\, 0 \end{pmatrix}, \quad \mathfrak{C}(\lambda_{(1)}) = \begin{pmatrix}  2 \;\,  1   \\  1\;\,   0\end{pmatrix}, \quad \mathfrak{C}(\lambda_{(2)}) = \begin{pmatrix} 1 \\ \;\,  \end{pmatrix}.
\] 
Applying Wright's map, we get
\[
    \lambda_{(0)}=(2), \; \lambda_{(1)}=(3,3), \text{ and } \lambda_{(2)}=(1).
\]
Also, since
\[
    \mathfrak{CF}_3(\lambda^{(3)})=\begin{pmatrix} 0_{2}\\ 0_{0}\end{pmatrix},
\]
we obtain
\[
    \lambda^{(3)}=(3,1,1)
\]
with its characteristic vector $c(\lambda)=(-1,0,1)$.

It is trivial from Wright's map that each $\lambda_{(j)}$ is an ordinary partition. Also, we can easily check that $\mathfrak{F} (\lambda^{(t)})$ satisfies the conditions in Theorem~\ref{main1}, which will be proved in Section \ref{sec3.2}.  

Next, we verify that
\begin{equation}\label{weight}
    |\lambda|=|\lambda^{(t)}|+t \sum\limits_{j=0}^{t-1} |\lambda_{(j)}|. 
\end{equation}
From the construction of $\mathfrak{C}(\lambda_{(j)})$ with $\sum\limits_{j=0}^{t-1}u_j=\sum\limits_{j=0}^{t-1}v_j=s$, the size of $\lambda$ is given by
\begin{align*}
    |\lambda|=\sum_{i=1}^{s}(q_i t+r_i + q'_i t + t-r'_i-1)+s 
    &=\sum_{j=0}^{t-1} \left\{ \sum_{k=1}^{u_j} (a_{j,k} t + j) + \sum_{k=1}^{v_j} (b_{j,k}t+t-j) \right\}\\
    &=\sum_{j=0}^{t-1} \left( t \sum_{k=1}^{u_j} a_{j,k} + t \sum_{k=1}^{v_j} b_{j,k}  + jw_j + t v_j \right).
\end{align*}
Also, we have that
\[
    |\lambda^{(t)}|=\sum\limits_{j=0}^{t-1} \left( \frac{ w_j(w_j -1)}{2}t + j w_j \right)
\]
and
\begin{align*}
    |\lambda_{(j)}|&=\sum_{k=1}^{u_j}(a_{j,k}+k-w_j)+\sum_{k=1}^{v_j}(b_{j,k}-k+1)
    =\sum_{k=1}^{u_j}a_{j,k}+\sum_{k=1}^{v_j}b_{j,k} -\frac{w_j(w_j-1)}{2} +v_j.
\end{align*}
Thus, \eqref{weight} holds. 

It is also clear from the construction that $\varphi$ is invertible because Wright's map is a bijection. 
Therefore, we complete the proof of Theorem \ref{main2}. We can also obtain (\ref{bijection2}) by replacing $m_{j+1}$ with $w_j$.

\subsection{Frobenius partition representation of $t$-core partitions }\label{sec3.2}
In this section, we prove Theorem~\ref{main1}. 
For a partition $\lambda$ with
\[
	\mathfrak{F}(\lambda)=\begin{pmatrix} a_1 \;\; a_2 \;\; \cdots a_s\\ b_1\;\; b_2\;\; \cdots b_s\end{pmatrix},
\]
we consider the hook of the box $(i,j)$ in three cases: 1) $i,j \leq s$, 2) $i\le s< j$, and 3) $j \leq s <i $.
We note that by the definition of $h_{i,j}(\lambda)$,
\begin{equation*}
        h_{i,j}:=h_{i,j}(\lambda)= (\lambda_i-i)+ (\lambda'_j -j)+1. 
\end{equation*}

\begin{enumerate}
	\item[Case 1:] $i,j\le s$. The hook length of the box $(i,j)$ is $h_{i,j}=a_i+b_j+1$. 
	For example,  given $\lambda=(8,7,7,4,4,2)$,  the hook length of the box $(1,2)$ is $12$, which equals $7+4+1$ as shown in Figure~\ref{fig:hooklength1} and  
    \begin{align*}
        \mathfrak{F}(\lambda)=\begin{pmatrix} 7\;\,  5\;\,  4 \;\, 0  \\  5\;\,  4 \;\,  2\;\,  1\end{pmatrix}. 
    \end{align*} 
    
    \begin{figure}[ht!]
        \centering	
        \ytableausetup{mathmode, boxsize=1em}
        \begin{ytableau}
             *(gray!30) & *(gray!30) &  *(gray!30)&  *(gray!30)&  *(gray!30)&  *(gray!30)& *(gray!30)&  *(gray!30)\\
                 &  & & & & &   \\
                 & *(gray!30) & & & & &  \\
                 & *(gray!30)  & &\\
                  & *(gray!30)  & & \\
                & *(gray!30) 
        \end{ytableau}
        \qquad $\longleftrightarrow$ \qquad 
        \begin{ytableau}
            *(white) & *(gray!30) &  *(gray!30)&  *(gray!30)&  *(gray!30)&  *(gray!30)& *(gray!30)&  *(gray!30)\\
                 & *(gray!30)  & & & & &   \\
                 & *(gray!30) & & & & &  \\
                 & *(gray!30)  & &\\
                  & *(gray!30)  & & \\
                & *(gray!30) 	
        \end{ytableau}
        \qquad $\longleftrightarrow$ \qquad 
        \begin{ytableau}
            *(white) &  & &  &  & & *(gray!30)&  *(gray!30)\\
                 &   & & & & &  *(gray!30)  \\
                 &  & &  *(gray!30)  & *(gray!30)   &  *(gray!30)  &   *(gray!30) \\
                 &  & &  *(gray!30)  \\
                  &   *(gray!30)   & *(gray!30)   &  *(gray!30)  \\
                & *(gray!30) 	
        \end{ytableau}	
        \caption{The hook of the box $(1,2)$ of the partition $(8,7, 7,4,4,2)$ }
        \label{fig:hooklength1} 
    \end{figure}
    
    \item[Case 2:] $i\le s<j$. Let $\ell=\lambda'_j$. As seen in Figure~\ref{fig:hooklength2}, $h_{i,j}$ is equal to the length of the rim hook from the last box of the $i$th row to the last box of the $j$th column. This rim hook consists of horizontal strips of boxes, where each but the last strip  has $a_k-a_{k+1}$ boxes for $k=i,\ldots, \ell-1$ and the length of the last strip is less than $a_{\ell}-a_{\ell+1}$. 
    We can now give the range for $h_{i,j}$ as follows:
    \begin{equation}\label{length2}
            a_i-a_\ell< h_{i,j}<  a_i-a_{\ell+1},  
    \end{equation}
    where $a_{s+1}=-1$. 
    
    \begin{figure}[ht!]
        \centering
        \begin{ytableau}
            *(white) & & & & &  *(gray!30)& *(gray!30)&  *(gray!30)\\
            &  & & &  &  *(gray!30)  &   \\
            &  & & & &*(gray!30)  &  \\
            &   & &\\
            &   & & \\
            & 
        \end{ytableau}
        \qquad $\longleftrightarrow$ \qquad 
        \begin{ytableau}
            *(white) & & & & &  & *(gray!30)&  *(gray!30)\\
            &  & & &  &   &  *(gray!30)  \\
            &  & & & & *(gray!30)   & *(gray!30)  \\
            &   & &\\
            &   & & \\
            & 
        \end{ytableau}
        \caption{The hook of the box $(1,6)$ of the partition $(8,7, 7,4,4,2)$ }
        \label{fig:hooklength2} 
    \end{figure}
    
    \item[Case 3:] $j\le s<i$. For the hook of the box $(i,j)$, the corresponding rim hook consists of vertical strips. Analogous to Case~2, we can compute the range of $h_{(i,j)}$, which is
    \begin{equation*}
            b_j - b_\ell< h_{i,j} < b_j-b_{\ell+1}, 
    \end{equation*}
    where $\ell=\lambda_i$ and $b_{s+1}=-1$.
\end{enumerate}
Based on this analysis, we characterize $t$-core partitions using Frobenius partitions.

\begin{proof}[Proof of Theorem \ref{main1}]
Let $\lambda$ be a partition with 
\[
	\mathfrak{F}(\lambda)= \begin{pmatrix} a_1 & a_2 &\cdots & a_s\\ b_1 & b_2 & \cdots & b_s\end{pmatrix}.
\]
The first condition that $a_i+b_j+1 \not \equiv 0 \pmod{t}$ immediately follows from Case~1. If $a_i+b_j+1\equiv 0 \pmod{t}$, then $h_{i,j}$ is a multiple of $t$, so $\lambda$ is not a $t$-core partition.

For the other conditions, suppose that $a_i=tq_i+r_i$ appears in the top row but $a_i-t=(t-1)q_i+r_i $ does not appear in the top row. Let $a_{k}=t q_k+ r_k$ be the smallest entry in the top row that is larger than $a_i-t$. Then $a_{k+1}=t q_{k+1}+ r_{k+1}$ is smaller than $a_i-t$ if $a_{k+1}$ exists, i.e.,
\begin{equation}\label{lengthrange}
	tq_i+r_i \ge t q_k  + r_k  > t (q_i-1)+r_i > t q_{k+1}+r_{k+1}. 
\end{equation}
The Frobenius partition is of the following form:
\[
	\mathfrak{F}(\lambda)=\begin{pmatrix} \cdots \;\; t q_i +r_i  \;\; \cdots \;\;  t q_{k} +r_k \;\; t q_{k+1}+ r_{k+1}  \;\; \cdots \\ \cdots \;\; \cdots \;\; \cdots \;\;  \cdots \;\; \cdots \;\; \cdots \;\; \cdots \;\; \cdots \;\; \cdots \;\; \cdots  \end{pmatrix}.
\]
By \eqref{length2}, the hook length $h_{i,j}$ ranges as follows:
\begin{equation} \label{11}
	t(q_i-q_k)+r_i-r_k < h_{i,j} < t(q_i-q_{k+1})+r_i-r_{k+1}. 
\end{equation}
Applying the inequality in \eqref{lengthrange}, we get 
\begin{equation}\label{12}
	t(q_i-q_k)+r_i-r_k \le t-1 
	\quad \text{ and } \quad 
	t+1 \le t(q_i-q_{k+1})+r_i-r_{k+1}. 
\end{equation}
It follows from \eqref{11} and \eqref{12} that $h_{i,j}=t$ for some $j$, i.e., there is a box of hook length $t$. This is a contradiction. 

We can similarly prove that if $b_{j}$ appears in the bottom row, then $b_{j}-t$ must appear in the bottom row. We omit the details. 
\end{proof}

Kolitsch \cite{K} gave the following characterization for $t$-core partitions in terms of $t$-colored Frobenius partitions, which is equivalent to our theorem. However, our proof does not use the biinfinite words from \cite{GKS}.

\begin{theorem}\cite[Theorem 1]{K}\label{thm3.1}
	A partition $\lambda$ is $t$-core if and only if $\mathfrak{CF}_t(\lambda)$ satisfies the conditions that no color appears in both rows and if $a_{k}$ appears in one row, then $(a-1)_{k},\ldots, 1_{k}, 0_{k}$ also appear in the same row. 
\end{theorem}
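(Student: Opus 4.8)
The plan is to derive Kolitsch's characterization (Theorem~\ref{thm3.1}) as a direct translation of the already-proved Theorem~\ref{main1}, rather than reproving it from scratch. Since the two statements describe the same set of $t$-core partitions via two encodings of the same data, the cleanest route is to show that the three conditions on $\mathfrak{F}(\lambda)$ in Theorem~\ref{main1} are \emph{equivalent}, term by term, to the two conditions on $\mathfrak{CF}_t(\lambda)$ in Theorem~\ref{thm3.1}, using only the defining relations $a_i = tq_i + r_i$ (top row) and $b_i = tq'_i + (t - r'_i - 1)$ (bottom row) from the definition of $\mathfrak{CF}_t$.

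First I would unpack the color condition. In $\mathfrak{CF}_t(\lambda)$, a top entry $a_i$ receives color $r_i$ and a bottom entry $b_j$ receives color $t - r'_j - 1$. Saying ``no color appears in both rows'' means there is no pair $(i,j)$ with $r_i = t - r'_j - 1$, i.e.\ no pair with $r_i + r'_j + 1 \equiv 0 \pmod t$. Writing $a_i + b_j + 1 = t(q_i + q'_j) + (r_i + (t - r'_j - 1)) + 1 = t(q_i + q'_j + 1) + (r_i + r'_j + 1 - t)$, one sees immediately that $a_i + b_j + 1 \equiv r_i + r'_j + 1 \pmod t$. Hence condition~(1) of Theorem~\ref{main1}, namely $a_i + b_j + 1 \not\equiv 0 \pmod t$ for all $i,j$, is literally the same statement as ``no color is shared between the two rows.'' This is the crux of the equivalence and I expect it to be the one small computation worth writing out.

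Next I would match the ``descent'' conditions. Condition~(2) of Theorem~\ref{main1} says that whenever $a_i \ge t$ appears in the top row, so does $a_i - t$. Because subtracting $t$ decreases the quotient by $1$ while fixing the color, $a_i - t = t(q_i - 1) + r_i$ carries the same color $r_i$ as $a_i$, and the list of all top entries sharing color $r_i$ is exactly $\{(tq + r_i) : \text{that entry occurs}\}$, which in the $\mathfrak{CF}_t$ notation are the integers $q$ appearing with color $r_i$. Thus condition~(2) holds for all $i$ if and only if, for each color $c$ occurring in the top row, the set of quotients occurring with color $c$ is downward closed, i.e.\ an initial segment $q_{\max}, q_{\max}-1, \dots, 1, 0$. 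This is precisely Kolitsch's requirement that if $q_{(c)}$ appears then $(q-1)_{(c)}, \dots, 1_{(c)}, 0_{(c)}$ appear in the same row. The identical argument applied to condition~(3), using $b_j - t = t(q'_j - 1) + (t - r'_j - 1)$ and the fact that this preserves the color $t - r'_j - 1$, handles the bottom row; here I would simply note that the color-reversal remark following the definition of $\mathfrak{CF}_t$ guarantees the ordering is consistent, so no separate verification is needed.

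Having established that conditions (1)--(3) on $\mathfrak{F}(\lambda)$ are equivalent to the two conditions on $\mathfrak{CF}_t(\lambda)$, the theorem follows at once from Theorem~\ref{main1}, which already characterizes $t$-cores by (1)--(3). The main (and only genuine) obstacle is bookkeeping around the color convention for the bottom row: because $b_j = tq'_j + (t - r'_j - 1)$ assigns color $t - r'_j - 1$ rather than the naive residue, I would take care to confirm that ``appears in the same row with all smaller quotients of the same color'' really does correspond to the $b_j - t$ condition, and that the reversed ordering noted after the definition does not disturb which quotients are present. Once that is checked, the equivalence is purely formal.
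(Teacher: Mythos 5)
Your proposal is correct and takes essentially the same approach as the paper: the paper offers no separate proof of this theorem, stating only that Kolitsch's characterization ``is equivalent to our theorem,'' and your term-by-term translation of conditions (1)--(3) of Theorem~\ref{main1} into the color conditions on $\mathfrak{CF}_t(\lambda)$ is precisely the verification the paper leaves implicit. One notational slip to fix: in your displayed computation you mix the paper's convention (where $r'_j$ is the \emph{color} of $b_j$ and $t-r'_j-1$ its residue) with the residue convention, so the middle equality $t(q_i+q'_j)+(r_i+(t-r'_j-1))+1 = t(q_i+q'_j+1)+(r_i+r'_j+1-t)$ is false as written; under either consistent convention, however, the conclusion that $a_i+b_j+1\equiv 0 \pmod t$ exactly when the two entries share a color goes through, so the argument stands.
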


\subsection{The number of hooks of length $t$}
In this section, we consider hooks of length $t$ and prove the following theorem using Frobenius partitions. This result can be found in \cite[p. 468]{EC2}. 

\begin{theorem}
	The number of hooks of length $t$ in a partition $\lambda$ equals the number of hooks of length $1$ in its $t$-quotient $\big(\lambda_{(0)},\ldots, \lambda_{(t-1)}\big)$.
\end{theorem}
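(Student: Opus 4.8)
The plan is to translate both quantities into a single combinatorial count on the profile $\{\lambda_i-i:i\ge 1\}$ of $\lambda$, and then split that count by color exactly as the bijection $\varphi$ does. The starting point is the identity $h_{i,j}=(\lambda_i-i)+(\lambda'_j-j)+1$ together with the three-case analysis preceding the proof of Theorem~\ref{main1}. Call an integer a \emph{bead} if it has the form $\lambda_i-i$ and a \emph{gap} otherwise; this is purely the language of the profile and needs no abacus. In each case the hook length is a difference between a bead and a gap: Case~1 realizes $h_{i,j}=a_i-(-1-b_j)$ with the bead $a_i\ge 0$ and the gap $-1-b_j<0$, Case~2 gives a bead and gap that are both $\ge 0$, and Case~3 gives a bead and gap that are both $<0$. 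Hence the number of $t$-hooks of $\lambda$ equals the number of pairs $(\beta,\beta-t)$ with $\beta$ a bead and $\beta-t$ a gap. On the quotient side, a hook of length $1$ in a partition $\mu$ is a removable corner, so the number of such hooks is the number of distinct part sizes of $\mu$; equivalently it is the number of pairs $(k,k-1)$ with $k$ a bead and $k-1$ a gap in the profile of $\mu$.

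Next I would use that $\mathfrak{CF}_t(\lambda)$ is precisely a residue-mod-$t$ bookkeeping of this profile. A top entry $a_i=tq_i+r_i$ records a bead of color $r_i$ at reduced height $q_i$, while a bottom entry $b_j=tq'_j+(t-r'_j-1)$ records a gap of color $r'_j$ at reduced height $-1-q'_j$; thus $\mathfrak{C}(\lambda_{(c)})$ collects exactly the reduced beads and reduced gaps of $\lambda$ in the residue class $c$. A pair $(\beta,\beta-t)$ has both entries in the single class $c=\beta\bmod t$, and in reduced coordinates it becomes a bead at some $k$ together with a gap at $k-1$. Consequently the $t$-hooks of $\lambda$ partition according to color, and the number contributed by color $c$ is the number of reduced bead-over-gap adjacencies recorded by $\mathfrak{C}(\lambda_{(c)})$.

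Finally I would identify these adjacencies with the corners of $\lambda_{(c)}$. Wright's map sends $\mathfrak{C}(\lambda_{(c)})$ to $\lambda_{(c)}$ and, up to the global shift absorbed by the staircase $\Delta$ of size governed by $w_c=u_c-v_c$, merely relabels the reduced positions; since a global shift of a profile preserves which beads sit immediately above a gap, it preserves the set of corners. Summing the equality ``($t$-hooks of color $c$) $=$ (corners of $\lambda_{(c)}$)'' over $c=0,\dots,t-1$ then yields the theorem. I expect the last identification to be the main obstacle: one must track the charge/shift $w_c$ carefully and verify that the three cases exhaust the corners of $\lambda_{(c)}$ without overlap, with Case~2 producing the corners coming from the top row of $\mathfrak{C}(\lambda_{(c)})$, Case~3 those from the bottom row, and Case~1 the single diagonal corner that occurs exactly when both rows of $\mathfrak{C}(\lambda_{(c)})$ contain a $0$. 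Making this trichotomy precise entirely within Wright's map, rather than by appealing to the abacus, is where the real work lies.
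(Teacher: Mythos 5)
Your strategy is sound and, if completed, would prove the theorem, but it is not the paper's argument, and as written it stops short of a proof at exactly the step you yourself flag as ``the real work.'' For comparison: the paper argues locally. It takes a box with $h_{i,j}=t$, runs the same three cases, and exhibits a corner of the appropriate $\lambda_{(c)}$ directly from the explicit formulas of Wright's map (Case 1 produces a last column $\binom{0}{0}$ in $\mathfrak{C}(\lambda_{(c)})$, Case 2 a top entry $q_i\ge 1$ whose successor is at least $2$ smaller, each of which forces a part of $\lambda_{(c)}$ strictly longer than the next), and then asserts that this correspondence is one-to-one. Your route is global: both counts become counts of adjacent (bead, gap) pairs, and the color-$c$ hooks are matched with the corners of $\lambda_{(c)}$ all at once by shift-invariance of adjacencies. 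That is more conceptual, and it would deliver the bijectivity and the converse direction (every corner of the quotient arises from a $t$-hook) for free, which the paper only asserts. Be aware, though, that the ``profile with beads and gaps'' is precisely the beta-set/abacus formalism that this paper advertises avoiding; your proof is essentially the classical abacus argument translated into the paper's notation.

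The genuine gap is assertion (b) below; (a) is a smaller one. (a) You claim the number of $t$-hooks equals the number of pairs $(\beta,\beta-t)$ with $\beta$ a bead and $\beta-t$ a gap, citing the paper's case analysis; but the paper's Cases 2--3 only give the \emph{ranges} in \eqref{length2}, not the exact statement that the hook lengths in row $i$ are precisely $\{a_i-g : g \text{ a gap},\ g<a_i\}$ with distinct boxes giving distinct pairs. This is standard but needs its own short argument (e.g., the box $(i,j)$ with $j>s$ corresponds to the gap $g=j-1-\lambda'_j$, and one checks this is a gap and that the assignment is bijective). (b) The pivotal claim that Wright's map sends $\mathfrak{C}(\lambda_{(c)})$ to a partition whose profile is the reduced color-$c$ profile shifted by $w_c$ is stated, deferred, and never proved; without it the corners of $\lambda_{(c)}$ are never actually identified with the reduced adjacencies, so the proof does not close. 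Fortunately it follows in a few lines from the paper's formulas: writing $u=u_c$, $v=v_c$, $w=w_c$, for $i\le u$ one has $\mu_i-i=a_i-w$, which is exactly the nonnegative reduced beads shifted by $-w$; for the bottom part, $\nu_j-j=(b_j-v+j)-j=b_j-v$, so the gaps of $\nu$ are the integers $m$ with $m+v\ge 0$ and $m+v\notin\{b_j\}$, and the standard fact that conjugation complements the profile, $\{\nu'_i-i : i\ge 1\}=\{-1-m : m \text{ a gap of } \nu\}$, yields $\{\mu_{u+i}-(u+i) : i\ge 1\}=\{k-w : k\le -1,\ -k-1\notin\{b_j\}\}$, exactly the negative reduced beads shifted by $-w$. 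With (a) and (b) supplied, your plan becomes a complete and correct proof.
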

\begin{proof}
Let
\[
	\mathfrak{F}(\lambda)
	=\begin{pmatrix} a_1& a_2 & \cdots & a_s \\ b_1& b_2 & \cdots & b_s
	\end{pmatrix}.
\]
Suppose $h_{i,j}(\lambda)=t$ for some $i,j$. We consider three cases: 1) $i,j \leq s$, 2) $i\le s< j$, and 3) $j \leq s <i $ as discussed in the beginning of Section~\ref{sec3.2}. Recall that
\[
	a_i=t q_i+ r_i\qquad\text{and}\qquad b_j=t q'_j + (t - r'_j - 1).
\]

\begin{enumerate}
	\item[Case 1:] $i,j \leq s$. This means that
	\[
    		a_i+b_j+1=t,
    \]
    so $q_i=q'_j=0$ and $r'_j=r_i$. Thus, in $\mathfrak{CF}(\lambda)$, both $a_i$ and $b_j$ will appear as $0_{r_i}$ in the top and bottom rows. Hence, in $\mathfrak{C}(\lambda_{(r_i)})$, they are placed in the last column as follows:
    \[
        \mathfrak{C}(\lambda_{(r_i)}) = \begin{pmatrix} & \cdots & 0\\ & \cdots & 0\end{pmatrix},
    \]
    from which we see that the last column corresponds to the corner box in the Young diagram of $\lambda_{(r_i)}$ whose hook length equals $1$.  

    \item[Case 2:] $i\leq s < j $. Let $\ell=\lambda'_{j}$, i.e., the length of the $j$th column in the Young diagram of $\lambda$. 
     By \eqref{length2}, we have
     \[
     a_i>h_{i,j}+a_{\ell+1}. 
     \]
     Since $h_{i,j}=t$ and $a_{\ell+1}\ge -1$, we get
 	\[
 		a_i\ge t. 
 	\]
	Also, $a_i-t$ cannot appear in the top row of $\mathfrak{F}(\lambda)$. If it did appear, by the argument seen in the proof of Theorem~\ref{main1}, there would be no box $(i,j)$ of hook length $t$. This is a contraction. Thus, in $\mathfrak{C}\big(\lambda_{(r_i)}\big)$, $q_i\ge 1$ and the next entry to $q_i$ is at least 2 less than $q_i$ if exists. This means that there is no box below the last box of the row corresponding to $q_i$, so the hook of the last box is of length 1. 

	\item[Case 3:] $j\leq s < i $. Analogous to Case~2, we can show that if there is a hook of length $t$, it contributes to a hook of length $1$ in the $t$-quotient. We omit the details. 
\end{enumerate}
In each case, the correspondence between hooks of length $t$ in $\lambda$ and hooks of length $1$ in the $t$-quotient of $\lambda$ is indeed one-to-one. This completes the proof. 
\end{proof}

\section{The Littlewood decomposition of self-conjugate partitions and doubled distinct partitions}\label{sec4}

In this section, we apply the bijection $\varphi$ to self-conjugate partitions and doubled distinct partitions.

\subsection{Self-conjugate partitions}\label{sec:sc}
For a partition $\lambda$, recall that $\lambda'_j$ denotes the number of boxes in the $j$th column of the Young diagram of $\lambda$. The partition $\lambda'=(\lambda'_1,\lambda'_2,\dots,\lambda'_m)$ is called the conjugate of $\lambda$. If $\lambda=\lambda'$, then $\lambda$ is called a self-conjugate partition.

It is easy to check that the Frobenius partition of a self-conjugate partition $\lambda$ is of the form
\[
	\mathfrak{F}(\lambda)=
	\begin{pmatrix} 
		a_1 & a_2 & \cdots & a_s \\ 
		a_1 & a_2 & \cdots & a_s 
	\end{pmatrix},
\]
where $a_1>a_2>\cdots a_s \ge 0$. 
For example, the Frobenius partition of the self-conjugate partition $\lambda=(8,5,5,4,3,1,1,1)$ is
\[
	\mathfrak{F}(\lambda)=
	\begin{pmatrix} 
		7 & 3 & 2 & 0 \\ 
		7 & 3 & 2 & 0 
	\end{pmatrix}.
\]

Let $sc(n)$ be the number of self-conjugate partitions of $n$. 
Using Jacobi's triple product identity, we get its generating function.
If $t$ is even,
\begin{align*}
	\sum_{n\ge 0} sc(n) q^n &= \prod_{j=1}^{t/2} (-q^{2j-1};q^{2t})_{\infty} (-q^{2t-2j+1};q^{2t})_{\infty} \\
	&= \frac{1}{(q^{2t};q^{2t})^{\frac{t}2}_{\infty}}   \prod_{j=1}^{t/2}  \sum_{ k=-\infty}^{\infty} q^{(2j-1)k+t k(k-1)}\\
	&= \frac{1}{(q^{2t};q^{2t})^{\frac{t}2}_{\infty}}  \sum_{(m_1,\ldots, m_{t/2})\in \mathbb{Z}^{\frac{t}2}} q^{ \sum_{j=1}^{\frac{t}2}(2j-1) m_j + t  \sum_{j=0}^{\frac{t}2-1} m_j(m_j-1)}. 
\end{align*}
If $t$ is odd,
\begin{align*}
	\sum_{n\ge 0} sc(n) q^n &= (-q^{t};q^{2t})_{\infty} \prod_{j=1}^{(t-1)/2} (-q^{2j-1};q^{2t})_{\infty} (-q^{2t-2j+1};q^{2t})_{\infty} \\
	&= \frac{(-q^{t};q^{2t})_{\infty} }{(q^{2t};q^{2t})^{\frac{t-1}2}_{\infty}}  \sum_{(m_1,\ldots, m_{(t-1)/2})\in \mathbb{Z}^{\frac{t-1}2}} q^{ \sum_{j=1}^{\frac{t-1}2}(2j-1) m_j + t  \sum_{j=1}^{\frac{t-1}2} m_j(m_j-1)}. 
\end{align*}

The following result on self-conjugate partitions can be found in \cite{Olsson}. 

\begin{proposition}\cite[Proposition 3.5]{Olsson}
	For a self-conjugate partition $\lambda$, the Littlewood decomposition $\phi$ in \cite{JK} implies that its $t$-core $\lambda^{(t)}$ is also self-conjugate and the $t$-quotient satisfies $\lambda_{(j)}'=\lambda_{(t-j-1)}$ for $j=0,1,\dots, t-1$. 
\end{proposition}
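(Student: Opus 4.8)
The plan is to prove the proposition directly through the bijection $\varphi$ developed in Section~\ref{sec3}, exploiting the fact that the Frobenius representation of a self-conjugate partition is symmetric: $\mathfrak{F}(\lambda)$ has identical top and bottom rows, say $a_1>a_2>\cdots>a_s\ge 0$. The key observation is to track what self-conjugacy imposes on the $t$-colored data. First I would write $a_i=tq_i+r_i$ for the top row; since the bottom row is the \emph{same} sequence $a_i$, the bottom-row decomposition $a_i=tq'_i+(t-r''_i-1)$ forces $q'_i=q_i$ and, crucially, $t-r''_i-1=r_i$, i.e.\ the bottom color is $r''_i=t-r_i-1$. Thus each column of $\mathfrak{CF}_t(\lambda)$ has the form ${q_i}_{(r_i)}$ on top and ${q_i}_{(t-r_i-1)}$ on the bottom. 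This single identity is the engine that drives everything: a top entry of color $j$ pairs with a bottom entry of color $t-j-1$ carrying the \emph{same} quotient $q_i$.

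Next I would examine how the splitting into $\mathfrak{C}(\lambda_{(j)})$ interacts with this pairing. The top entries of color $j$ are exactly the $q_i$ with $r_i=j$, while the bottom entries of color $j$ are exactly the $q_i$ with $t-r''_i-1=\nobreak j$, equivalently $r_i=t-j-1$. Hence the \emph{top} row of $\mathfrak{C}(\lambda_{(j)})$ consists of precisely the same multiset of quotients as the \emph{bottom} row of $\mathfrak{C}(\lambda_{(t-j-1)})$, and symmetrically. In Wright's-map language, this says that the array $\mathfrak{C}(\lambda_{(j)})$ is obtained from $\mathfrak{C}(\lambda_{(t-j-1)})$ by interchanging its two rows. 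I would then invoke the behavior of Wright's map under row-swapping: swapping the top and bottom rows of a two-rowed array conjugates the resulting partition $\mu$ (this is transparent from the dot-diagram description in Section~\ref{sec2.2}, where swapping rows reflects the picture across the diagonal). This yields $\lambda_{(j)}' = \lambda_{(t-j-1)}$ for all $j$, which is the second assertion.

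For the first assertion, that $\lambda^{(t)}$ is self-conjugate, I would use the construction of $\mathfrak{CF}_t(\lambda^{(t)})$ from the differences $w_j=u_j-v_j$. The row-swap relation between $\mathfrak{C}(\lambda_{(j)})$ and $\mathfrak{C}(\lambda_{(t-j-1)})$ immediately gives $u_j=v_{t-j-1}$ and $v_j=u_{t-j-1}$, hence $w_j=-w_{t-j-1}$. Recalling the rule that a positive $w_j$ contributes the staircase $(w_j-1)_j,\dots,0_j$ to the \emph{top} row of $\mathfrak{CF}_t(\lambda^{(t)})$ while a negative $w_j$ contributes $(-w_j-1)_j,\dots,0_j$ to the \emph{bottom} row, the relation $w_j=-w_{t-j-1}$ shows that the color-$j$ staircase in the top row is matched by an identical-length color-$(t-j-1)$ staircase in the bottom row, and vice versa. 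Decoding these colored entries back through the definition $a=tq+r$, $b=tq'+(t-r'-1)$, one checks that $\mathfrak{F}(\lambda^{(t)})$ again has equal top and bottom rows, so $\lambda^{(t)}$ is self-conjugate.

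I expect the main obstacle to be the bookkeeping in the final paragraph: verifying cleanly that the symmetric pattern $w_j=-w_{t-j-1}$ in the \emph{colored} data really does translate into an honest symmetric (equal-rows) \emph{uncolored} Frobenius array $\mathfrak{F}(\lambda^{(t)})$, since the color-to-residue dictionary differs between the two rows (top uses $r$, bottom uses $t-r'-1$). The cleanest route is probably to note that the color reversal $j\mapsto t-j-1$ together with the quotient-matching is exactly the same symmetry that produced self-conjugacy at the level of $\lambda$ itself, so the argument is self-similar; making this precise, rather than re-deriving it coordinate by coordinate, is the step that needs care. The row-swap property of Wright's map should be stated as a short lemma (or remark) before the proof, since it is used as a black box.
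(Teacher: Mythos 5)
Your proposal is correct and follows essentially the same route as the paper's own argument: both extract from self-conjugacy the pairing of each top entry ${q_i}_{(r_i)}$ with a bottom entry ${q_i}_{(t-r_i-1)}$, deduce that $\mathfrak{C}(\lambda_{(j)})$ and $\mathfrak{C}(\lambda_{(t-j-1)})$ are row-swaps of one another, and conclude both assertions from there. The only difference is one of explicitness—you isolate the row-swap/conjugation property of Wright's map as a lemma and verify the color-to-residue bookkeeping $w_j=-w_{t-j-1}$ for the core in detail, steps the paper leaves implicit—so your write-up is, if anything, a slightly more complete version of the same proof.
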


We can prove this result using our bijection $\varphi$. Let $\lambda$ be a self-conjugate partition such that 
\[
	\mathfrak{F}(\lambda)=
	\begin{pmatrix} 
		a_1 & a_2 & \cdots & a_s \\ 
		a_1 & a_2 & \cdots & a_s 
	\end{pmatrix}
	\quad \text{and} \quad
	\mathfrak{CF}_t(\lambda)=
	\begin{pmatrix} 
		{q_1}_{(r_1)} & {q_2}_{(r_2)} & \cdots & {q_s}_{(r_s)} \\ 
		{q_1}_{(t-r_1-1)} & {q_2}_{(t-r_2-1)} & \cdots & {q_s}_{(t-r_s-1)}
	\end{pmatrix},
\]
where $a_j=tq_j+r_j$ for nonnegative integers $q_j$ and $r_j$ with $0 \le r_j \le t-1$. Then, we have
\begin{align*}
	\mathfrak{C}(\lambda_{(j)})=\begin{pmatrix} a_{j,1}\; a_{j,2} \; \cdots \; a_{j,u_j} \\ b_{j,1}\; b_{j,2}\; \cdots \; b_{j,v_j} \end{pmatrix}
	\quad \Leftrightarrow \quad
	\mathfrak{C}(\lambda_{(t-j-1)})=\begin{pmatrix} b_{j,1}\; b_{j,2}\; \cdots \; b_{j,v_j} \\ a_{j,1}\; a_{j,2} \; \cdots \; a_{j,u_j} \end{pmatrix}  
\end{align*}
for $j=0,\ldots, t-1$. Hence, the entries of the top row match the entries in the bottom row in $\mathfrak{F}(\lambda^{(t)})$, so $\lambda^{(t)}$ is self-conjugate and $\lambda_{(j)}'=\lambda_{(t-j-1)}$ for $j=0,1,\dots,t-1$.

For example, let $t=3$ and $\lambda=(8,5,5,4,3,1,1,1)$ be a self-conjugate partition. We have
\begin{align*}
	\mathfrak{F}(\lambda)=\begin{pmatrix} 7\;\,  3\;\,  2 \;\, 0  \\  7\;\,  3 \;\,  2\;\,  0\end{pmatrix}
	\quad \text{and} \quad
	\mathfrak{CF}_3(\lambda)=\begin{pmatrix} 2_{1} \;\,  1_{0}\;\,  0_{2} \;\, 0_{0}  \\  2_{1} \;\,  1_{2} \;\,  0_{2}\;\,  0_{0}\end{pmatrix}.
\end{align*}
We split $\mathfrak{CF}_3(\lambda)$ into 3 arrays:
\[
	\mathfrak{C}(\lambda_{(0)})= \begin{pmatrix} 1\;\; 0\\ ~\;\; 0 \end{pmatrix}, \quad \mathfrak{C}(\lambda_{(1)}) = \begin{pmatrix}  2   \\  2\end{pmatrix}, \quad \mathfrak{C}(\lambda_{(2)}) = \begin{pmatrix}  ~\;\; 0 \\ 1\;\; 0  \end{pmatrix},
\]
from which we get $\lambda^{(3)}=(1)$, $\lambda_{(0)}=(1,1)$, $\lambda_{(1)}=(3,1,1)$, and $\lambda_{(2)}=(2)$.

\subsection{Doubled-distinct partitions}\label{sec:dd}
For a partition $\lambda=(\lambda_1,\lambda_2,\dots,\lambda_s)$ into distinct parts, a doubled distinct partition $\lambda\lambda$ is a partition whose Frobenius partition is of the form
\[
	\mathfrak{F}(\lambda\lambda)=
	\begin{pmatrix} 
		\lambda_1 & \lambda_2 & \cdots & \lambda_s \\ 
		\lambda_1-1 & \lambda_2-1 & \cdots & \lambda_s-1 
	\end{pmatrix}.
\]
For example, for $\lambda=(8,4,3,1)$,
\[
	\mathfrak{F}(\lambda\lambda)=
	\begin{pmatrix} 
		8 & 4 & 3 & 1 \\ 
		7 & 3 & 2 & 0 
	\end{pmatrix}.
\]
Thus, $\lambda\lambda=(9,6,6,5,3,1,1,1)$.


Let $dd(n)$ be the number of doubled distinct partitions of $n$. We also find the generating function identity for doubled distinct partitions.
If $t$ is odd,
\begin{align*}
	\sum_{n\ge 0} dd(n) q^n &= (-q^{2t};q^{2t})_{\infty} \prod_{j=1}^{(t-1)/2} (-q^{2j};q^{2t})_{\infty} (-q^{2t-2j};q^{2t})_{\infty} \\
	&= \frac{(-q^{2t};q^{2t})_{\infty} }{(q^{2t};q^{2t})^{\frac{t-1}2}_{\infty}}  \sum_{(m_1,\ldots, m_{(t-1)/2})\in \mathbb{Z}^{\frac{t-1}2}} q^{ \sum_{j=1}^{\frac{t-1}2}(2j) m_j + t  \sum_{j=1}^{\frac{t-1}2} m_j(m_j-1)}. 
\end{align*}
If $t$ is even,
\begin{align*}
	\sum_{n\ge 0} dd(n) q^n &= (-q^{t};q^{2t})_{\infty} (-q^{2t};q^{2t})_{\infty} \prod_{j=1}^{t/2-1} (-q^{2j};q^{2t})_{\infty} (-q^{2t-2j};q^{2t})_{\infty} \\
	&= \frac{(-q^{t};q^{t})_{\infty} }{(q^{2t};q^{2t})^{\frac{t}2-1}_{\infty}}  \sum_{(m_1,\ldots, m_{t/2-1})\in \mathbb{Z}^{\frac{t}2-1}} q^{ \sum_{j=1}^{\frac{t}2-1}(2j) m_j + t  \sum_{j=1}^{\frac{t}2-1} m_j(m_j-1)}. 
\end{align*}

Garvan, Kim, and Stanton \cite{GKS} obtained the following result by restricting the Littlewood decomposition to doubled distinct partitions.

\begin{proposition}\cite[Bijection 3]{GKS}
     For a doubled distinct partition $\mu$, the Littlewood decomposition $\phi(\mu)=\big( \mu^{(t)},(\mu_{(0)},\mu_{(1)},\dots,\mu_{(t-1)})\big)$ implies that its $t$-core $\mu^{(t)}$ and $\mu_{(0)}$ are also doubled distinct and $\mu_{(j)}'=\mu_{(t-j)}$ for $j=1,\dots,t-1$. 
\end{proposition}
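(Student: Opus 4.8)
The plan is to follow the template of the self-conjugate case in Section~\ref{sec:sc}, tracking how the coloring interacts with the doubled-distinct structure. First I would record the $t$-colored Frobenius partition of $\mu=\lambda\lambda$. Writing $a_i=\lambda_i$ and $b_i=\lambda_i-1$ and setting $a_i=tq_i+r_i$ with $0\le r_i\le t-1$, a short case analysis determines the color and value of each bottom entry $b_i=a_i-1$: if $r_i\ge 1$ then $b_i$ has color $t-r_i$ and value $q_i$, while if $r_i=0$ then $b_i$ has color $0$ and value $q_i-1$ (here $q_i\ge 1$ because $\lambda_s\ge 1$ forces $a_i\ge 1$). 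This case split is the computational heart of the argument, and the rest is bookkeeping.

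Next I would split $\mathfrak{CF}_t(\mu)$ into the arrays $\mathfrak{C}(\mu_{(j)})$ by color. From the case analysis, the color-$0$ array has top row $(p_1,\dots,p_m)$ and bottom row $(p_1-1,\dots,p_m-1)$ for the strictly decreasing values $p_k=q_i$ coming from the indices with $r_i=0$; this is exactly a doubled-distinct Frobenius form, so $\mu_{(0)}$ is doubled distinct. For $1\le j\le t-1$, the top row of $\mathfrak{C}(\mu_{(j)})$ records $\{q_i:r_i=j\}$ and its bottom row records $\{q_i:r_i=t-j\}$; comparing with $\mathfrak{C}(\mu_{(t-j)})$ shows that the two arrays are obtained from each other by interchanging their rows. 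Since interchanging the rows of a two-rowed array corresponds under Wright's map to conjugating the resulting partition --- the same fact already invoked in Section~\ref{sec:sc} --- this yields $\mu_{(j)}'=\mu_{(t-j)}$ for $j=1,\dots,t-1$.

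It then remains to show that $\mu^{(t)}$ is doubled distinct. I would compute the characteristic vector $(w_0,\dots,w_{t-1})$: one gets $w_0=0$ because color $0$ occurs equally often in both rows, and $w_{t-j}=-w_j$ for $1\le j\le t-1$ by the row-interchange symmetry above. Feeding this into the construction of $\mathfrak{CF}_t(\mu^{(t)})$, for each $j$ with $w_j>0$ the top row acquires the color-$j$ entries $0,1,\dots,w_j-1$ and the bottom row acquires the color-$(t-j)$ entries $0,1,\dots,w_j-1$. Decoloring, the top entries become $\{td+j:0\le d<w_j\}$ and the corresponding bottom entries become $\{td+(j-1):0\le d<w_j\}$, so the full bottom set is exactly the full top set shifted down by one. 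Sorting both rows in decreasing order therefore pairs each top part with that part minus one, which is precisely the doubled-distinct form; moreover every top part is at least $1$ since $w_0=0$ excludes color $0$, guaranteeing nonnegative bottom entries.

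The main obstacle is the bottom-entry color bookkeeping in the first step, where the $r_i=0$ case behaves differently (the value drops to $q_i-1$ while the color stays $0$) from the $r_i\ge 1$ case (the color flips to $t-r_i$ with the value unchanged). Getting this split right is exactly what simultaneously forces $\mu_{(0)}$ and $\mu^{(t)}$ to be doubled distinct while sending $\mu_{(j)}$ to the conjugate of $\mu_{(t-j)}$. Once this is in hand, the remaining steps parallel Section~\ref{sec:sc} and amount only to reading off the arrays and applying the invertibility of $\varphi$.
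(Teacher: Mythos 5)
Your proof is correct and takes essentially the same approach as the paper: the same $r_i=0$ versus $r_i\ge 1$ case analysis of the colored Frobenius columns, the same identification of $\mathfrak{C}(\mu_{(0)})$ as a doubled-distinct array, and the same row-swap symmetry between $\mathfrak{C}(\mu_{(j)})$ and $\mathfrak{C}(\mu_{(t-j)})$ combined with the fact that swapping rows conjugates the image under Wright's map. Your explicit verification via the characteristic vector that $\mathfrak{F}(\mu^{(t)})$ has doubled-distinct form (using $w_0=0$ and $w_{t-j}=-w_j$) spells out a step the paper leaves implicit, but it is the intended argument.
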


Similarly to Section \ref{sec:sc}, we can reprove this result using Frobenius partitions.
Let $\mu$ be a doubled distinct partition with 
\[
	\mathfrak{F}(\mu)=
	\begin{pmatrix} 
		a_1 & a_2 & \cdots & a_s \\ 
		a_1-1 & a_2-1 & \cdots & a_s-1 
	\end{pmatrix},
\]
where $a_1>a_2>\cdots>a_s\ge 1$.
We first note that a column of $\mathfrak{F}(\mu)$ is represented in $\mathfrak{CF}(\mu)$ as follows: for some $q\ge 0$ and $1\le r\le t-1$,
\[
	\binom{tq}{tq-1} \in \mathfrak{F}(\mu) \longrightarrow \binom{q_0}{(q-1)_{0}} \in \mathfrak{CF}(\mu) \quad \text{and} \quad \binom{tq+r}{tq+r-1} \in \mathfrak{F}(\mu) \longrightarrow \binom{q_r}{q_{t-r}} \in \mathfrak{CF}(\mu).
\]

Thus, we have 	
\[
	\mathfrak{C}(\mu_{(0)})=\begin{pmatrix} \, a_{0,1}\phantom{-1}\; \;\;\; \,\, a_{0,2}\phantom{-1} \; \;\;\cdots \; \,\, \;\; a_{0,u_0}\phantom{-1} \\ a_{0,1}-1\;\;\;\;  a_{0,2}-1\;\;\;  \cdots \;\;\; a_{0,u_0}-1 \end{pmatrix},
\] 
and for $j=1,\ldots, t-1$,
\begin{align*}
	\mathfrak{C}(\mu_{(j)})=\begin{pmatrix} a_{j,1}\; a_{j,2} \; \cdots \; a_{j,u_j} \\ b_{j,1}\; b_{j,2}\; \cdots \; b_{j,v_j} \end{pmatrix}
	\quad \Leftrightarrow \quad
	\mathfrak{C}(\mu_{(t-j)})=\begin{pmatrix} b_{j,1}\; b_{j,2}\; \cdots \; b_{j,v_j} \\ a_{j,1}\; a_{j,2} \; \cdots \; a_{j,u_j} \end{pmatrix}.  
\end{align*}	
Therefore, we conclude that $\mu^{(t)}$ and $\mu_{(0)}$ are doubled distinct and $\mu_{(j)}'=\mu_{(t-j)}$ for $j=1,2,\dots,t-1$.	

For example, let $t=3$ and $\mu=(9,6,6,5,3,1,1,1)$ be a doubled distinct partition. We have
\begin{align*}
	\mathfrak{F}(\mu)=\begin{pmatrix} 8\;\,  4\;\,  3 \;\, 1  \\  7\;\,  3 \;\,  2\;\,  0\end{pmatrix}
	\quad \text{and} \quad
	\mathfrak{CF}_3(\mu)=\begin{pmatrix} 2_{2} \;\,  1_{1}\;\,  1_{0} \;\, 0_{1}  \\  2_{1} \;\,  1_{2} \;\,  0_{2}\;\,  0_{0}\end{pmatrix}.
\end{align*}
We split $\mathfrak{CF}_3(\mu)$ into 3 arrays:
\[
	\mathfrak{C}(\mu_{(0)})= \begin{pmatrix} 1 \\ 0 \end{pmatrix}, \quad \mathfrak{C}(\lambda_{(1)}) = \begin{pmatrix}  1\;\; 0 \\ ~\;\; 2\end{pmatrix}, \quad \mathfrak{C}(\lambda_{(2)}) = \begin{pmatrix}  ~\;\; 2 \\ 1\;\; 0  \end{pmatrix}.
\]
Therefore, we have $\mu^{(3)}=(2)$, $\mu_{(0)}=(2)$, $\mu_{(1)}=(1,1,1,1)$, and $\mu_{(2)}=(4)$.

\section*{Acknowledgements}
The authors would like to thank the anonymous reviewer for the careful reading and helpful comments, which have significantly improved our paper.  Hyunsoo Cho was supported by the National Research Foundation of Korea (NRF) grant (2021R1C1C2007589, 2019R1A6A1A11051177). Eunmi Kim was supported by the National Research Foundation of Korea grant (RS–2023-00244423, NRF–2019R1A6A1A11051177). Ae Ja Yee was partially supported by a grant ($\#633963$) from the Simons Foundation.



\begin{thebibliography}{99}

\bibitem{GEA}
G. E. Andrews, {\it Generalized Frobenius partitions}, Mem.~Amer.~Math.~Soc. {\bf 49}(1984), no. 301.

\bibitem{BD}
A. Berkovich and A. Dhar, {\it On partitions with bounded largest part and fixed integral GBG-rank modulo primes}, Ann. Comb. (2024). https://doi.org/10.1007/s00026-024-00733-y.

\bibitem{BG0}
A. Berkovich and  F.~G. Garvan, {\it On the Andrews-Stanley refinement of Ramanujan's partition congruence modulo 5 and generalizations}, Trans. Amer. Math. Soc. {\bf 358} (2006), no. 2, 703--726.

\bibitem{BG1}
A. Berkovich and  F. G. Garvan, {\it The BG-rank of a partition and its applications}, Adv. in Appl. Math. {\bf 40} (2008), no. 3, 377--400.

\bibitem{BG2}
A. Berkovich and  F. G. Garvan, {\it The GBG-rank and t-cores I. Counting and $4$-cores}, J. Comb. Number Theory {\bf 1} (2009), no. 3, 237--252.

\bibitem{BN} 
O. Brunat and R. Nath, {\it Cores and quotients of partitions through the Frobenius symbol}, preprint, arXiv:1911.12098.

\bibitem{CKKY}
H. Cho, E. Kim, B. Kim, and A. J. Yee, {\it On the distribution of $t$-hooks of doubled distinct partitions}, preprint, arxiv:2503.12040.

\bibitem{CKLLYY}
H. Cho, E. Kim, H.-H. Lee, K. Lee, A. J. Yee, and J. Yoo, {\it GBG-rank generating functions for ordinary, self-conjugate, and doubled distinct partitions}, preprint.  

\bibitem{GKS}
F. G. Garvan, D. Kim, and D. Stanton, {\it Cranks and $t$-core}, Invent. Math. {\bf 101} (1990), no. 1, 1--17.

\bibitem{James}
G. D. James, {\it Some combinatorial results involving Young diagrams}, Math. Proc. Cambridge Philos. Soc. {\bf 83} (1978), no. 1, 1--10.

\bibitem{JK}
G. James and A. Kerber, {\it The representation Theory of the Symmetric Group}, Encyclopedia of Mathematics and its Applications, vol. 16, Addison-Wesley Publishing Co., Reading, MA, 1981. 

\bibitem{K}
L. Kolitsch, {\it Generalized Frobenius partitions, $k$-cores, $k$-quotients, and cranks}, Acta Arith.  {\bf 62} (1992), no. 1, 97--102.

\bibitem{L}
D. E. Littlewood,  {\it Modular representations of symmetric groups}, Proc. Roy. Soc. London Ser. A {\bf 209} (1951), 333--353.

\bibitem{M}
I. Macdonald, {\it Symmetric functions and Hall polynomials. Second edition. With contributions by A. Zelevinsky}, Oxford Mathematical Monographs. Oxford Science Publications. The Clarendon Press, Oxford University Press, New York, 1995. 
 
\bibitem{N}
T. Nakayama, {\it On some modular properties of irreducible representations of a symmetric group. I.} Jpn. J. Math {\bf 17} (1941), 165--184.

\bibitem{Olsson}
J. B. Olsson, {\it Combinatorics and representations of finite groups}, Vorlesungen aus dem Fachbereich Mathematik der Universit\"{a}t GH 
Essen [Lecture Notes in Mathematics at the University of Essen], vol. 20, Universit\"{a}t Essen, Fachbereich Mathematik, Essen, 1993.

\bibitem{Pak}
I. Pak, {\it Partition bijections, a survey}, Ramanujan J. {\bf 12} (2006), no. 1, 5--75.

\bibitem{EC2}
R. P. Stanley, {\it Enumerative Combinatorics, Vol. 2}, Cambridge Studies in Advanced Mathematics, vol. 62, Cambridge University Press, Cambridge, 1999.

\bibitem{Wright}
E. M. Wright, {\it An enumerative proof of an identity of {J}acobi}, J. London Math. Soc. {\bf 40} (1965), 55--57.

\bibitem{Yee}
A. J. Yee, {\it Combinatorial proofs of generating function identities for F-partitions}, J. Combin. Theory Ser. A {\bf 102} (2003), no. 1, 217--228.

\end{thebibliography}
\end{document}